 \newtheorem{theorem}{Theorem}
\newtheorem{lemma}[theorem]{Lemma}
\newtheorem{remark}[theorem]{Remark}
\newtheorem{corollary}[theorem]{Corollary}
\def\R{\Bbb R}
\def\F{{\mathbb F}}
\def\P{{\mathbb P}}
\title{On incidences of lines in regular complexes}
\author{Misha Rudnev}
\address{Misha Rudnev, Department of Mathematics, University of Bristol,
  Bristol BS8 1TW, United Kingdom}
\email{misharudnev@gmail.com}
\begin{document}
\begin{abstract} 
	A regular linear line complex is a  three-parameter set of lines in space, whose Pl\"ucker vectors lie in a hyperplane, which is not tangent to the Klein quadric. Our main result is a bound $O(n^{1/2}m^{3/4} + m+n)$ for the number of incidences between $n$ lines in a complex and $m$ points in $\F^3$, where $\F$ is a field,  and $n\leq char(\F)^{4/3}$ in positive characteristic.  Zahl has recently observed that bichromatic pairwise incidences of  lines coming from two distinct line complexes account for the nonzero single distance problem for a set of $n$ points in $\F^3$. This implied the new bound $O(n^{3/2})$ for the number of realisations of the distance, which is a square, for  $\F$, where $-1$ is not a square in the $\F$-analogue of the Erd\H os single distance problem in $\R^3$. Our incidence bound yields, under a natural constraint, a weaker bound $O(n^{1.6})$, which holds for any distance, including zero, over any $\F$. \end{abstract}
\maketitle

\section{Introduction}
In 1946 Erd\H os posed the problem of estimating how many pairs of points in a set of $n$ points could be at given distance from each other \cite{E}. By considering points in a square grid in $\R^2$ he conjectured the bound $O(n^{1+c/\log \log n}),$ for some $c>0$. The problem is wide open, with the best known bound $O(n^{4/3})$ being a direct consequence of (a generalised version of) the Szemer\'edi-Trotter theorem in $\R^2$ \cite{SST}. The single distance problem is naturally open in $\R^3$ as well, where the best known bound $O(n^{3/2-c})$, for $c<1/394,$ is due to Zahl \cite{Z}.

Since the early 2000s,  analogues of Erd\H os-type problems in arithmetic and geometric combinatorics have been actively explored in the finite field $\F$ setting, where they have so far proved to be more difficult, at least in the range when $n$ is relatively small relative to $p={\rm char}\,\F$. See, e.g. \cite{BKT}, \cite{IR}.  In this setting, nothing better than the trivial bound $O(n^{3/2})$ is known in two dimensions. 
However, in three dimensions, remarkably, Zahl \cite{Za} has recently proven the same bound $O(n^{3/2})$, under some constraints to be described shortly, roughly restricting his bound to ``50\%'' of the distances.
This note builds up on the new geometric insight in the latter paper of Zahl, aiming to extend its results, roughly to ``all distances''. However, it only succeeds in establishing a weaker bound 
$O(n^{8/5})$.

\medskip
Throughout $\F$ is a field of positive characteristic $p$. The claims are trivial for small $p$, so we assume that $p$ is odd. The results apply to the $p=0$ case  by just ignoring the constraints involving $p$. 


The $d$-dimensional projective space over $\F$ is denoted as $\P^d$, and $\P^3$ is often restricted to the chart $\F^3$. Furthermore, $\mathcal K\subset \P^5$ denotes the Klein quadric --  the space of lines in $\P^3$, see \cite[Chapter 6]{S} for foundations of line geometry in three dimensions.  In the Pl\"ucker coordinates\footnote{If a line $l$ in $\P^3$ is defined by two distinct points on it, with homogeneous coordinates $(x_0:x_1:x_2:x_3)$ and  $(y_0:y_1:y_2:y_3)$, then $P_{ij}:=x_iy_j-x_jy_i$, the point $\boldsymbol P\in \mathcal K$ being called the Klein image of line $l$ in $\P^3$.}
$$
\boldsymbol P := [P_{01}:P_{02}:P_{03}:P_{23}:P_{31}:P_{12}] $$
in $\P^5$, the equation of $\mathcal K$ is
$$
P_{01}P_{23}+ P_{02}P_{31}+ P_{01}P_{31}=0,\,\;\mbox{ equivalently  } \; \boldsymbol \omega\cdot \boldsymbol v = 0\,,
$$
using the notation $\boldsymbol P = [\boldsymbol \omega:\boldsymbol v]$ as to its first:last three components, where $\cdot$ is the standard dot product.
The lowercase  $l$ denotes a  {\em physical} line in $\P^3$,  whose {\em Klein image} is a point in $\mathcal K\subset \P^5$.  For lines not lying in the plane at infinity, the component $\boldsymbol \omega$ of their Pl\"ucker coordinates is their direction vector.

The standard notations $\ll,\gg$ are used to suppress absolute constants in inequalities, the symbol  $\sim$ means that both relations $\ll,\gg$ hold simultaneously, and the $O$ symbol is occasionally used instead of $\ll$.

Let $A\subset \F^3$ be a set of $n$ points, with $a=(x,y,z)\in A$ in the standard basis. Zahl \cite[Theorem 1.9]{Za} proved the following bound on the number of occurrences of the distance $r\neq 0$ between pairs of points of $A$.
\begin{theorem}\label{th:Zahl_1.9} Suppose, $-1$ is not a square in $\F$, $r\neq 0$ and $n\leq p^2$. Then 
$$
|\{(x_1,y_1,z_1),(x_2,y_2,z_2)\in A\times A:\, (x_1-x_2)^2+(y_1-y_2)^2+(z_1-z_2)^2= r^2\}|\ll n^{3/2}\,.
$$
\end{theorem}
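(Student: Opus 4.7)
The abstract signals the strategy: reduce the count of distance-$r$ pairs to a bichromatic intersection count between two families of lines, each lying in its own regular linear line complex in $\P^3$. Concretely, the plan is to construct, for every $a\in A$, two lines $\ell_a^+\in\mathcal C^+$ and $\ell_a^-\in\mathcal C^-$ in two fixed distinct regular complexes, so that $(a,b)$ is a distance-$r$ pair if and only if $\ell_a^+\cap\ell_b^-\ne\varnothing$.

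For the encoding I would seek Pl\"ucker vectors $\boldsymbol P_a^\pm=[\boldsymbol\omega_a^\pm:\boldsymbol v_a^\pm]$ depending affine-linearly on $a=(x,y,z)$, lying simultaneously on $\mathcal K$ and on fixed hyperplanes $\mathcal H^\pm\subset\P^5$ cutting out $\mathcal C^\pm$, such that the Klein bilinear pairing satisfies
\[
\boldsymbol\omega_a^+\cdot\boldsymbol v_b^- + \boldsymbol\omega_b^-\cdot\boldsymbol v_a^+ \;=\; c\,\bigl(|a-b|^2-r^2\bigr),
\]
identically in $a,b$, for a nonzero constant $c$. Since the left side vanishes exactly when $\ell_a^+$ meets $\ell_b^-$, this yields the required equivalence. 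The hypothesis that $-1$ is not a square in $\F$ enters precisely here: the anisotropy of the form $x^2+y^2+z^2$ over $\F$ is what makes it possible to split the diagonal $|a|^2+|b|^2$ contribution between the two complexes $\pm$ so that both Pl\"ucker vectors genuinely lie on $\mathcal K$ over $\F$.

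With the encoding in place, let $N$ denote the number of distance-$r$ pairs, and for $p\in\F^3$ set $k^\pm(p):=|\{a\in A:p\in\ell_a^\pm\}|$. Then, summing over the intersection points,
\[
N\;=\;\sum_p k^+(p)\,k^-(p)\;\le\;\Bigl(\sum_p k^+(p)^2\Bigr)^{\!1/2}\Bigl(\sum_p k^-(p)^2\Bigr)^{\!1/2}
\]
by Cauchy--Schwarz. Each sum on the right counts, up to a diagonal correction, intersecting pairs of lines within a single regular complex. Because in a regular linear complex the lines through a fixed point form a single plane pencil, the induced incidence structure is effectively two-dimensional, and a Szemer\'edi--Trotter-type bound (applicable in positive characteristic in the range $n\le p^2$) should yield $\sum_p k^\pm(p)^2\ll n^{3/2}$. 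Substituting back gives $N\ll n^{3/2}$.

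The main obstacle is the encoding step. Matching the quadratic form $|a-b|^2-r^2$ against the bilinear Klein pairing between two affine-linear images of $\F^3$ in $\P^5$, while keeping each image on $\mathcal K$ and inside its hyperplane $\mathcal H^\pm$, is a tight system of algebraic identities. The off-diagonal term $-2\,a\cdot b$ is naturally absorbed into the Klein pairing, but the diagonal contribution $|a|^2+|b|^2$ must be distributed using the Pl\"ucker relation, and this is what forces the two-complex structure and brings in the hypothesis on $-1$. Once this algebraic setup is carried through, the incidence step reducing everything to same-complex intersections is relatively mechanical, modulo invoking a characteristic-sensitive line-incidence input in the admissible range.
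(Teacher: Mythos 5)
Your reduction of distance-$r$ pairs to bichromatic intersections of two line families in two regular complexes is indeed the paper's starting point (the explicit encoding is \eqref{complex}, with $r_1-r_2=r$), but the two key steps after that are wrong, and the hypothesis of the theorem is attached to the wrong place. First, the hypothesis that $-1$ is not a square is \emph{not} what makes the encoding work over $\F$: the Pl\"ucker vectors in \eqref{complex} involve $x\pm iy$, and when $-1$ is not a square the lines live in the complexification $\F[i]^3$, not in $\F^3$. The encoding itself works for every field and every $r$. The hypothesis is needed later, for the degeneracy analysis (Lemma \ref{l:con}): when $i\notin\F$ and $r,r_1,r_2\neq 0$, no point (and no plane) of $\F[i]^3$ can carry two lines of \emph{each} colour. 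That fact caps the bichromatic contribution of a point where $k$ lines meet at $O(k)$ rather than $k^2$, and then summing $k|Q_k|$ against Koll\'ar's bound \eqref{e:gk} over dyadic $k\ll n^{1/2}$ gives $O(n^{3/2})$. Your argument never uses this, so it cannot see why the hypotheses $-1\notin\F^2$ and $r\neq 0$ matter.

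Second, the Cauchy--Schwarz step is fatally lossy. Writing $N\le(\sum_p k^+(p)^2)^{1/2}(\sum_p k^-(p)^2)^{1/2}$ reduces the problem to the \emph{monochromatic} second moment, and $\sum_p\binom{k^+(p)}{2}$ counts exactly the pairs of lines within one complex $C_{r_1}$ that meet, i.e.\ the pairs $a,a'\in A$ with $\|a-a'\|=(r_1-r_1)^2=0$ --- pairs of points of $A$ on a common isotropic line. The form $x^2+y^2+z^2$ is isotropic over $\F_p$ for every odd $p$ (regardless of whether $-1$ is a square), so $A$ may lie entirely on one isotropic line, in which case all $n$ lines of one colour are concurrent, $\sum_p k^+(p)^2\geq n^2$, and your bound degenerates to $N\ll n^2$. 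Theorem \ref{th:Zahl_1.9} carries no hypothesis excluding this configuration, so the claim $\sum_p k^\pm(p)^2\ll n^{3/2}$ is simply false in the stated generality; moreover even in nondegenerate situations it is not a ``Szemer\'edi--Trotter-type'' two-dimensional statement but the Guth--Katz/Koll\'ar three-dimensional line-intersection bound (Theorem \ref{th:KZ}), whose coplanarity and regulus conditions must be verified --- which is precisely what Lemma \ref{l:con} and Lemma \ref{l:skewn} do. The bichromatic count must be handled directly, not routed through the monochromatic one.
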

He also points out that the claim of the theorem may not apply without additional constraints if $r^2$ gets replaced by $-r^2$ (provided that $-1$ is not a square in $\F$) or if $r=0$.
The reason for this is that for fixed $(x_1,y_1,z_1)$, the ``sphere'' 
$$
\{(x,y,z)\in \F^3:\, (x-x_1)^2+(y-y_1)^2+(z-z_1)^2= -r^2\}
$$
is ruled by isotropic lines\footnote{An isotropic line in $\F^3$ is a line, whose direction vector $\boldsymbol \omega$ is self-orthogonal, namely $\boldsymbol\omega\cdot \boldsymbol\omega=0$; there is a cone of such lines through every point in $\F^3$.}  (being doubly ruled for $r\neq0$ and a cone for $r=0$), see \cite[Lemma 6]{Ru1}. Thus for $r=0$, a set $A = \sqcup_{i=1}^k A_i$, with $k<c n^{1/2}$, where $c$ henceforth stands for a suitably small absolute positive  constant, and each $A_i$ of equal size being supported on an isotropic line would provide a counterexample to an extension of the claim of Theorem \ref{th:Zahl_1.9} without additional assumptions.  Similarly for $r\neq 0$, $A_i$ should be split half and half between a pair of parallel isotropic lines, one containing ``centres'' of the ``spheres'' and the other being contained in all of them. We further refer to spheres, their centres and radii (defined up to a sign choice) without quotation marks, including the $r=0$ case.

It appears that allowing $A$ to have at most $O(n^{1/2})$ points on an isotropic line should suffice for generalising the claim of Theorem \ref{th:Zahl_1.9} to the case of $-1$ being a square or generally $\F$ being algebraically closed. It turns out that such a generalisation is fully contingent on being able to handle the case $r=0$. In a sense, the obstruction due to many pairs of points of $A$ lying on isotropic lines  that Zahl points out is the only one. This is demonstrated in the proof of the forthcoming Corollary \ref{t:main} and in particular of Lemma \ref{l:con}, on which it relies.

We only succeed in proving a weaker bound $\ll n^{1.6}$ (rather than $\ll n^{1.5}$), generalising Theorem \ref{th:Zahl_1.9}, see  Corollary \ref{t:main}. It is closely related to our incidence bound between points and isotropic lines in  $\F^3$, Corollary \ref{c:iso}. The latter bound itself  arises as the dual statement to Theorem \ref{t:complex}, which is the main vehicle in this note. Theorem \ref{t:complex} is an incidence bound between a set of lines in a general linear regular line complex (to be defined shortly) and a set of points in $\F^3$.

The new geometric insight that enabled Zahl to arrive in the claim of Theorem \ref{th:Zahl_1.9} is as follows.  For $a=(x_1,y_1,z_1)$ and $b=(x_2,y_2,z_2)$, define the distance between them as
$$\|a-b\|:=(x_1-x_2)^2+(y_1-y_2)^2+(z_1-z_2)^2\,.$$

Then for any $r\in \F$, the condition 
\begin{equation}
\|a-b\|= r^2 = (r_1-r_2)^2\,
\label{distance}
\end{equation}
is equivalent, for any $r_1,r_2:\,r_1-r_2=r$, to the fact that two lines $l_1=l_1(a)$ and $l_2=l_2(b)$, given respectively, for $j=1,2,$ by Pl\"ucker vectors
\begin{equation}\label{complex}
\boldsymbol P =[1:-z_j+r_j :x_j-iy_j:r_j^2-x_j^2-y_j^2-z_j^2:-(z_j+r_j):x_j+iy_j]\,,
\end{equation} 
meet.  Observe that if $-1$ is not a square in $\F$, these lines live in the complexification $\F[i]^3$ of $\F^3$; otherwise they live in $\F^3$ itself.

This equivalence follows directly from the well-known fact that two lines $l,l'$ in $\P^3$ meet iff the Pl\"ucker vectors of their Klein images satisfy the equation
\begin{equation}\label{inter}
\boldsymbol \omega\cdot\boldsymbol v'+ \boldsymbol \omega'\cdot\boldsymbol v=0\,.
\end{equation}
With the usual formal dot product notation, can rewrite the latter condition in Pl\"ucker coordinates as
\begin{equation}\label{interinv}
\boldsymbol  P\cdot \mathcal I \boldsymbol P' = 0, \qquad \mbox{with} \qquad \mathcal I(\boldsymbol \omega:\boldsymbol v) := (\boldsymbol v:\boldsymbol \omega) \,,
\end{equation}
where $\mathcal I$ is an involution in $\P^5$.

Condition \eqref{distance} also represents an instance of {\em spherical tangency}, namely that two spheres, centred respectively at $a,b$ and with radii $r_j$, share a tangent plane at some point. Furthermore, \eqref{complex} represents a linear injection $\phi:\,(x_j,y_j,z_j,r_j)\to \mathcal K$ (quadraticity of the $P_{23}$-component in  \eqref{complex} being due to the Klein quadric equation). The instant of intersection of the lines, corresponding to $\phi(x_1,y_1,z_1,r_1)$ and $\phi(x_2,y_2,z_2,r_2)$ is equivalent to the distance condition \eqref{distance} concerning their preimages. A similar line of argument was pursued in  \cite{RS}, where the preimages, rather than spheres, i.e. quadruples $(x_j,y_j,z_j,r_j)$ here, were pairs of points in $\F^2$.  

We next fetch the concept of a {\em line complex} to describe the rationale behind the proof of Theorem \ref{th:Zahl_1.9} and develop its  generalisation herein.
A {\em linear  regular line complex} $C$ is the set of lines in $\P^3$, which are the Klein preimages of $\mathcal C:=\mathcal K\cap \mathcal H$, for a hyperplane $\mathcal H\subset\P^5$, whose normal vector $\boldsymbol N=[\boldsymbol n:\boldsymbol m]$ is not in $\mathcal K$. (Otherwise the linear line complex is called {\em singular}. Then the hyperplane $\mathcal H$ is tangent to $\mathcal K$ at the point $[\boldsymbol m:\boldsymbol n]$, and the Klein preimages of $\mathcal K\cap \mathcal H$ are  lines, meeting the line, which is the Klein preimage of the above tangency point.) If $\F$ is an algebraically closed field, all linear  regular line complexes, often referred to below just as {\em line complexes}, are projectively equivalent.  For the purposes of this paper we may assume that $\F[i]$, over which the line complexes in consideration are defined, is algebraically closed.  More about the theory of line complexes can be found in  \cite[Chapter 3]{PW}.

The hyperplane  $\mathcal H$ defining $\mathcal C$, the Klein image of the line complex $C$, cuts every two-plane inside $\mathcal K$ transversely. A plane inside $\mathcal K$ corresponds to physical lines, which are either concurrent or coplanar -- there are two rulings of $\mathcal K$ by planes. Thus $C$ is a three-dimensional family of lines, coming in pencils: lines in $C$, concurrent at any point in $q\in\P^3$ are automatically co-planar in some plane $\pi(q)$ (which arises by acting on $q$ with a $4\times 4$ nonsingular skew-symmetric matrix, whose entries can be read out from the equation of  $\mathcal H$, see \cite{PW}).

We now return to distances in $\F^3$, namely the discussion around relations \eqref{distance} and \eqref{complex}.
Observe that an instance of \eqref{distance} is satisfied for any $r$ and fixed $r_{1},r_{2}:\,r=r_1-r_2$ iff there is a {\em bichromatic} incidence between  a line from a family $L_1$ of $n$, say red lines in the complex $C_1=C_{r_1}$ and a line from a disjoint family $L_2$ of $n$, say  black lines in the complex $C_2=C_{r_2}$. The two line complexes and line families therein are defined, respectively, by  hyperplanes $\mathcal H_1$, $\mathcal H_2$ in $\P^5$, with the normal vectors
\begin{equation}\label{normal}
\boldsymbol N_j = [2r_j:-1:0:0:1:0]\,,\qquad j=1,2 \;\;\mbox{ and }\;\{(x_j,y_j,z_j)\} = A\,.
\end{equation}
Thus each $r\in \F$ defines a line complex $C_r$.

Once this has been set up, one enters the realm of incidence theory for lines in three dimensions, founded by Guth and Katz \cite{GK}. To generalise Theorem \ref{th:Zahl_1.9} to the case when $-1$ is a square in $\F$ we will use the forthcoming Theorem \ref{t:complex} together with the the state of the art incidence estimates, generalising the breakthrough \cite[Theorem 4.1]{GK}. For exposition purposes we split them into two statements. The first, bichromatic one is an amalgamation of  \cite[Theorem 12]{Ru}, \cite[Lemma 3.1]{FdZ}, \cite[Lemma 3.2]{Za}.

\begin{theorem}\label{th:KZ} Let $L_1\cup L_2$ be the union of $n\leq p^2$ red and $n$ black lines in $\F^3$, where at most $O(n)$ bichromatic pairs of lines lie in the same plane or doubly-ruled surface. 

Then the number of bichromatic pairwise line intersections is $O(n^{3/2})$.
\end{theorem}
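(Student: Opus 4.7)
The plan is to prove this via the Guth--Katz polynomial-partitioning framework \cite{GK}, transported to $\F^3$ in the range $n\leq p^2$; this constraint is exactly what permits use of both the partitioning theorem and the subsequent structural results on ruled surfaces in characteristic $p$.  One selects a polynomial $P\in\F[x,y,z]$ of degree $D\sim n^{1/2}$ whose zero set $Z(P)$ partitions $\F^3\setminus Z(P)$ into $O(D^3)=O(n^{3/2})$ open cells, and a line not lying on $Z(P)$ meets at most $D+1$ cells and $Z(P)$ itself at most $D$ times by B\'ezout.

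For bichromatic intersections occurring inside cells, writing $R_i,B_i$ for the red and black lines meeting the $i$-th cell, one has $\sum_iR_i,\sum_iB_i\ll nD$, and the partitioning can be arranged so that $\max_iR_i,\max_iB_i\ll n/D^2$, giving $\sum_iR_iB_i\leq(\max_iR_i)(\sum_iB_i)\ll(n/D^2)(nD)=n^{3/2}$.  Bichromatic intersections on $Z(P)$ in which at least one of the two lines does not lie on $Z(P)$ number at most $nD=n^{3/2}$, since each such line contributes at most $D$ intersection points with $Z(P)$.

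The substantive remaining part is bichromatic intersections between two lines both lying on $Z(P)$.  Decompose $Z(P)$ into irreducible components.  Those that are planes or doubly-ruled surfaces contribute $O(n)$ by hypothesis.  For an irreducible component $S$ of degree $d\geq 3$ that is neither planar nor doubly-ruled, the positive-characteristic flecnode / Cayley--Salmon analysis of \cite{Ru}, refined in \cite{FdZ,Za}, shows that $S$ contains at most $O(d^2)$ lines and that distinct lines on $S$ can meet only along a low-dimensional exceptional subset; summing over components of total degree at most $D\sim n^{1/2}$ yields $O(n^{3/2})$ further intersections.

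The main obstacle is this last algebraic step, where characteristic-$p$ analogues of the flecnode and Cayley--Salmon theorems are needed to classify the surfaces carrying many lines; the hypothesis $n\leq p^2$ is imposed precisely because it is the range in which these surface-structure results survive.  The ``amalgamation'' phrasing in the theorem statement refers to combining the specific forms of the above ingredients as given in \cite{Ru,FdZ,Za}.
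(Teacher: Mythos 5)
There is a genuine and fatal gap at the very first step: polynomial partitioning does not exist over a general field of positive characteristic. The partitioning theorem you invoke rests on the polynomial ham sandwich theorem, a topological statement (via Borsuk--Ulam) about $\R^3$; over $\F_p$ the complement $\F^3\setminus Z(P)$ has no decomposition into connected ``cells'', and there is no mechanism forcing a line to meet only $D+1$ of them (a line in $\F_p^3$ has $p$ points with no ordering or connectivity to exploit). Consequently the entire cellular part of your argument --- the bound $\sum_i R_iB_i\ll n^{3/2}$ and the claim that one can arrange $\max_i R_i\ll n/D^2$ --- has no meaning in the setting of the theorem, which is stated for arbitrary $\F$ with the sole restriction $n\leq p^2$. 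Your closing remark that $n\leq p^2$ is imposed ``precisely because it is the range in which polynomial partitioning and the surface-structure results survive'' misattributes the role of this hypothesis: partitioning never survives; the constraint comes from the degree-reduction step (one needs the auxiliary surface containing all the lines to have degree $O(n^{1/2})<p$ so that B\'ezout and the characteristic-$p$ flecnode/ruledness arguments apply).

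The paper does not prove Theorem~\ref{th:KZ} at all; it is quoted as an amalgamation of \cite[Theorem 12]{Ru}, \cite[Lemma 3.1]{FdZ} and \cite[Lemma 3.2]{Za}, and the actual proofs in those references proceed quite differently: one passes to the Klein quadric, reinterprets pairwise line intersections as point--plane incidences (or vice versa), performs degree reduction by choosing a single low-degree polynomial vanishing on all $2n$ lines, and then uses the Guth--Katz algebraic structure theory of ruled surfaces (flecnode polynomial, reguli, the dichotomy between planes, doubly-ruled quadrics and surfaces containing few lines) --- with no cell decomposition anywhere. Your third paragraph, on lines contained in $Z(P)$ and the classification of its ruled components, is the part of your sketch that genuinely overlaps with the real argument; but as written it is attached to a partitioning scaffold that cannot be erected over $\F_p$, so the proposal as a whole does not constitute a proof of the stated theorem.
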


The next theorem is due to Koll\'ar \cite[Theorem 2]{Ko}, see also \cite[Lemma 3.4]{Za}.
\begin{theorem} \label{th:Kol} Let $L$ be a family of $n\leq p^2$ lines in $\F^3$, where at most $O(n^{1/2})$ lines are coplanar, 
Let $Q_k$ be the set of points, where at least $k\geq 3$ lines meet. Then
\begin{equation}\label{e:gk}
|Q_k|\ll \frac{n^{3/2}}{k^{3/2}}\,.
\end{equation}
\end{theorem}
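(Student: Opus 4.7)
My plan is to invoke the polynomial method in the Guth-Katz-Koll\'ar framework. Set $N := |Q_k|$; I aim to derive $N \ll n^{3/2}/k^{3/2}$ by first bounding the number of pairwise intersections among lines of $L$ by $O(n^{3/2})$. Given such a bound, each $q \in Q_k$ accounts for at least $\binom{k}{2}$ intersecting pairs of lines of $L$, and two distinct lines can share at most one intersection point, so distinct points in $Q_k$ contribute disjoint collections of pairs. Double counting then gives $\binom{k}{2} N \ll n^{3/2}$, which yields $N \ll n^{3/2}/k^2 \leq n^{3/2}/k^{3/2}$ for $k \geq 1$.

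For the intersecting-pairs bound I would proceed by polynomial method. By dimension counting in the space of polynomials in $\F[x,y,z]$ of degree at most $d$, which has dimension $\binom{d+3}{3} \sim d^3$, there exists a nonzero $P$ of degree $d \sim N^{1/3}$ vanishing on every point of $Q_k$. Under the operative assumption that $d < k$ (the complementary regime being handled separately by a direct incidence count), each $l \in L$ meets the zero set $V(P)$ in at least $k > d$ points, and so $l \subset V(P)$ by B\'ezout. Decomposing $P = \prod_i P_i$ into irreducible factors with $\sum_i \deg P_i = d$, and attributing each $l \in L$ to an irreducible component containing it, plane components ($\deg P_i = 1$) contain at most $O(n^{1/2})$ lines of $L$ by the coplanarity hypothesis (yielding $O(dn^{1/2})$ lines in total), while non-planar irreducible components of degree $d_i \geq 2$ contain at most $O(d_i^2)$ lines of $L$ (summing to $O(d^2)$). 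This leads to $n \ll d^2 + dn^{1/2}$, which in combination with $d \sim N^{1/3}$ and dyadic pigeonholing over the multiplicity $k$ extracts the target bound.

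The main obstacle is the $O(d_i^2)$ bound on lines of $L$ inside a non-planar irreducible component, since a ruled surface -- a quadric, a cone, a higher-degree scroll -- carries infinitely many lines and \emph{a priori} could host an unbounded number of lines of $L$ without violating the coplanarity hypothesis. Here $k \geq 3$ is essential: a generic point of a ruled surface lies on at most two lines of the ruling, so any point of $Q_k$ on such a component must be a special point -- a singular point, a flecnode, or an intersection of two rulings -- and a three-dimensional flecnode polynomial argument of degree $O(d_i)$ bounds the number of such points by $O(d_i^2)$. This is precisely the step where the positive-characteristic constraint $n \leq p^2$ enters, suppressing the purely inseparable flecnode pathologies that are absent in characteristic zero and that would otherwise invalidate the ruled-surface analysis.
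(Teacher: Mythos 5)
The paper does not prove this statement; it is imported verbatim from Koll\'ar \cite{Ko} (see also \cite[Lemma 3.4]{Za}), so there is no in-paper proof to compare against, and your sketch must be judged on its own. It has a fatal gap at the very first step. You propose to bound the total number of intersecting pairs of lines of $L$ by $O(n^{3/2})$ and then divide by $\binom{k}{2}$. First, that total-pairs bound is false under the stated hypotheses: split $n$ lines evenly between the two rulings of a smooth quadric; at most two of them are coplanar, yet there are $\sim n^2/4$ intersecting pairs. (All of those intersection points have multiplicity two, which is exactly why the theorem carries the hypothesis $k\geq 3$ and why Theorem \ref{th:KZ} carries an explicit doubly-ruled-surface condition; a correct proof must work with $Q_k$ for $k\geq 3$ directly and cannot pass through a global pair count.) Second, even if the relevant pair count were $O(n^{3/2})$, your double count would yield $|Q_k|\ll n^{3/2}/k^{2}$, which is strictly stronger than the theorem and, as the paper's own Remark explains, is precisely the Guth--Katz exponent whose validity over general fields is open. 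Deriving an open statement as an easy intermediate step is a sign the route is wrong.

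The polynomial-method paragraph also does not close. It is not true that every $l\in L$ meets $V(P)$ in at least $k$ points --- only lines carrying more than $d$ points of $Q_k$ are forced into $V(P)$, and the omitted case of lines \emph{not} contained in $V(P)$ is in fact where the exponent $k^{-3/2}$ comes from: each such line meets $Q_k$ in at most $d$ points by B\'ezout, so if these lines carry a positive proportion of the $\geq k|Q_k|$ incidences one gets $k|Q_k|\ll nd\sim n|Q_k|^{1/3}$, i.e. $|Q_k|\ll (n/k)^{3/2}$, which is the theorem. Your concluding inequality $n\ll d^2+dn^{1/2}$ with $d\sim |Q_k|^{1/3}$ points the wrong way (it reads as a lower bound on $|Q_k|$) and does not extract the claimed estimate, dyadic pigeonholing notwithstanding. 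Finally, the role of $n\leq p^2$ is more prosaic than suppressing inseparable flecnode pathologies: it guarantees $d\sim|Q_k|^{1/3}\ll n^{1/2}\leq p$, so that a degree-$d$ polynomial vanishing at more than $d$ $\F$-points of a line genuinely contains that line.
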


It turns out, see in particular the forthcoming Lemma \ref{l:con}, that if we add the condition that at most $n^{1/2}$ points of $A$ lie on an isotropic line in $\F^3$, the conditions of Theorems \ref{th:KZ}, \ref{th:Kol} are satisfied apropos of the line family $L_1\cup L_2$. Moreover, the maximum number of concurrent (hence coplanar) lines in a complex $C=C_r$ turns out to be be equal to the maximum number of points of $A$ on an isotropic line. Besides, the complimentary ruling condition of Theorem \ref{th:KZ} is satisfied quite generally, see Lemma \ref{l:skewn} below.

Theorem \ref{th:Zahl_1.9}, however, holds without the constraint on the maximum number of points on an isotropic line, because if $i\not\in \F$ and $r,r_1,r_2\neq 0$, then it turns out to be impossible that more than two lines of both colours are concurrent  or coplanar in $\F[i]^3$. Then one arrives in Theorem \ref{th:Zahl_1.9} by multiplying estimate \eqref{e:gk} by $k$ and summing over dyadic values of $k$ for $k=O(n^{1/2}),$ since concurrency of lines in a line complex automatically means coplanarity, thus for $k \geq 2n^{1/2}$ there is a better than \eqref{e:gk}  inclusion-exclusion bound $|Q_k|\ll n/k$.
 
 Here we analyse concurrency/coplanarity in a slightly different, more mindless way versus \cite{Za}\footnote{Zahl observes, remarkably, that if $i\not\in \F$, then for all $r$, the lines in the complex $C_r$ live in the Heisenberg surface $\Im z = \Im(x\bar y)$ in $\F[i]^3$, which has a natural Lie group structure.
This enabled him to give further consideration to the case $\F=\mathbb R$ by fetching polynomial partitioning techniques and using the line incidence estimate underlying Theorem  \ref{th:Zahl_1.9} as a base one to prove a generally sharp bound on the number of {\em rich pencils of contacting spheres}, see \cite[Theorem 1.14]{Za}. }  via direct calculations in the proof of Lemma \ref{l:con}.  
However, without the constraints of Theorem \ref{th:Zahl_1.9}, the corresponding multiple bichromatic line concurrencies may occur. This constitutes the principal obstruction against generalising the estimate of Theorem \ref{th:Zahl_1.9} to the case $r=0$ or when $-1$ is a square in $\F$.
Then, under a trivially unavoidable additional assumption that $A$ have no more than $n^{1/2}$ points on an isotropic line,  Theorems \ref{th:KZ}, \ref{th:Kol} alone would yield $O(n^{1.75})$ occurrences of a distance, which is worse than the bound $O(n^{1.6})$  that we claim. Our bound owes itself to the forthcoming Theorem \ref{t:complex} -- a new incidence bound that we interpolate with the bound of  Theorem \ref{th:Kol}.  It is still the multiple line concurrency issue that prevents us from vindicating  the estimate of Theorem  \ref{th:Zahl_1.9} when $\F[i]=\F$ or $r=0$, as our divide-and-concur approach effectively forces us to use estimate \eqref{e:gk}, with a fairly large threshold value of $k\sim n^{1/5}$. 


\medskip
We now state the main results of this note. The vehicle behind them is the following incidence bound.
\begin{theorem}\label{t:complex} Let $\F$ be a field of characteristic $p$. The number of incidences between $n\leq p^{4/3}$ lines from a linear regular line complex and  $m$ points in $\F^3$ is $O(m^{3/4}n^{1/2}+m+n ).$
\end{theorem}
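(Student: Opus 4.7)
Plan. The strategy is to Plücker-dualise and reduce to the Guth--Katz point--line incidence bound in $\F^3$. Pass each line $l$ of the complex $C$ to its Klein image $\boldsymbol P(l)\in\mathcal{C}:=\mathcal{K}\cap\mathcal{H}$, a smooth three-dimensional quadric in $\P^4$; pass each point $q\in\F^3$ to the $\alpha$-line $\alpha_q\subset \mathcal{C}$ obtained by intersecting the $\alpha$-plane $\Pi_q\subset \mathcal{K}$ (parameterising lines through $q$) with $\mathcal{H}$. This $\alpha$-line parameterises precisely the pencil of lines of $C$ through $q$ lying in the null-correlate plane $\pi(q)$. Since the incidence $q\in l$ is equivalent to $\boldsymbol P(l)\in\alpha_q$, the problem reduces to bounding point--line incidences between $n$ points and $m$ lines on $\mathcal C$.

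Second, I would transfer from $\mathcal{C}\subset \P^4$ to $\P^3$ via stereographic projection from a generic point $c_0\in\mathcal{C}$. Working over $\F[i]$ if necessary (which is permitted, as the complexes in view are defined over $\F[i]$), this is a birational isomorphism $\mathcal{C}\dashrightarrow \P^3$ which sends lines on $\mathcal{C}$ missing $c_0$ bijectively to lines in $\P^3$; indeed a line $\alpha\subset \mathcal{C}$ not through $c_0$ projects to the pencil of directions from $c_0$ to points of $\alpha$, and by linearity in homogeneous coordinates this pencil is itself a line in $\P^3$. Picking $c_0$ outside the finitely many $\alpha_q$'s and outside the $n$ Plücker points, one obtains an incidence-preserving configuration of $n$ points and $m$ lines in $\F[i]^3$.

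Finally, I would apply the Guth--Katz-style point--line incidence bound in $\F^3$, in its finite-characteristic form (whose standing hypothesis on the number of lines in terms of $p$ matches the condition $n\leq p^{4/3}$ in the statement). The unusual exponents arise because after the dualisation the configuration has $m$ \emph{lines} and $n$ \emph{points}, swapping the roles relative to the customary form of Guth--Katz; plugging in, one gets $I\ll n^{1/2}m^{3/4}+m+n$, which is exactly the claim. The principal technical hurdle is verifying the non-degeneracy hypotheses that Guth--Katz demands of the post-projection line configuration, namely that at most $O(\sqrt{m})$ image lines are coplanar and at most $O(\sqrt{m})$ lie on a common doubly-ruled quadric surface. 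This ought to follow from the structural fact that two $\alpha$-lines $\alpha_{q_1},\alpha_{q_2}$ meet on $\mathcal{C}$ iff the join $\overline{q_1q_2}$ lies in $C$, so that a large coplanar or co-ruled sub-family of projected $\alpha$-lines would force the parameter points $q_i$ into a rigid algebraic configuration (essentially a line or a regulus of $C$) whose incidence contribution can be absorbed into the $O(m+n)$ term.
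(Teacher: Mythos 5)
Your reduction is attractive and the first two steps are essentially sound: sending the $n$ complex lines to their Klein images on $\mathcal C=\mathcal K\cap\mathcal H$ and each point $q$ to the line $\alpha_q=\Pi_q\cap\mathcal H$ does convert the problem into a point--line incidence problem on a quadric threefold, and stereographic projection (over a sufficiently large extension of $\F$, with the centre chosen off the finitely many bad loci) carries it into $\P^3$ while preserving incidences. The fatal step is the last one. The bound $I\ll |P|^{1/2}|L|^{3/4}+|P|+|L|$ for points and lines in three-space is the full Guth--Katz theorem, equivalent to the richness estimate $|Q_k|\ll |L|^{3/2}k^{-2}$, and no such estimate is known over fields of positive characteristic. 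What is available (Koll\'ar's Theorem \ref{th:Kol}, or what follows from the point--plane bound) is only $|Q_k|\ll |L|^{3/2}k^{-3/2}$; feeding that into your dual configuration of $m$ lines and $n$ points yields $I\ll mn^{1/3}+m+n$, which is weaker than the claimed $m^{3/4}n^{1/2}+m+n$ whenever $m\gg n^{2/3}$ (for $m=n$ it gives $n^{4/3}$ instead of $n^{5/4}$). The Remark after Corollary \ref{t:main} flags precisely this obstruction: upgrading $k^{-3/2}$ to $k^{-2}$ for structured line families in $\F^3$ is currently out of reach. There is also no positive-characteristic point--line theorem whose standing hypothesis is ``$n\le p^{4/3}$''; in the paper that threshold arises for a different reason, namely so that $|Q_k|\le n^{3/2}\le p^2$, the hypothesis of the point--plane bound.

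The paper's proof avoids Guth--Katz altogether and runs in the opposite direction to your dualisation. It observes that a $k$-rich point of the $n$ complex lines is a pencil, i.e.\ a point--plane pair $(q,\pi(q))$, and that a physical line belongs to two pencils iff the point of one pencil is incident to the plane of the other. A Cauchy--Schwarz over the lines bounds $k|Q_k|$ by $\sqrt{n}$ times the square root of the number of such point--plane incidences among the $|Q_k|$ pencils, and Rudnev's point--plane theorem (Lemma \ref{ppb}) then gives $|Q_k|\ll n^2k^{-4}+nk^{-1}$, from which the incidence bound follows by dyadic summation. To salvage your dual route you would need a genuine substitute for Guth--Katz adapted to the family of $\alpha$-lines on $\mathcal C$ (on top of the coplanarity and regulus verifications you deferred, which are themselves delicate after projection); citing a ``finite-characteristic Guth--Katz'' does not close the argument.
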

The next two corollaries  follow from applying Theorem \ref{t:complex} over the field $\F[i]$.

One follows readily from Theorem \ref{t:complex} applied to the line complex $C_0$, defined by \eqref{complex}, with $r=0$. Observe that the lines in $C_0$ are themselves non-isotropic. 

\begin{corollary} \label{c:iso} The number of incidences between $m$ isotropic lines and $n\leq p^{4/3}$ points in $\F^3$ is $O(m^{3/4}n^{1/2} + n+m).$\end{corollary}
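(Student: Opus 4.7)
My plan is to derive Corollary \ref{c:iso} from Theorem \ref{t:complex} applied over $\F[i]$ by exploiting a Klein-type correspondence that swaps isotropic lines in $\F^3$ with points in $\F[i]^3$, and swaps points in $\F^3$ with lines of the complex $C_0$.

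To each $a=(x,y,z)\in\F^3$ associate the line $\tilde a := \phi(a,0)\in C_0$ whose Pl\"ucker vector is given by \eqref{complex} at $r=0$; the map $a\mapsto\tilde a$ is manifestly injective. The heart of the argument is to produce, for every isotropic line $\ell\subset\F^3$, a point $p_\ell\in\F[i]^3$ with the property
\[
a\in\ell \;\iff\; p_\ell\in\tilde a,
\]
and to verify that distinct isotropic lines yield distinct $p_\ell$.

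To build $p_\ell$, I pick any two distinct $a,b\in\ell$: since $\|a-b\|=0$, the Klein incidence condition \eqref{inter} guarantees that $\tilde a\cap\tilde b$ is a genuine point of $\F[i]^3$. A direct parametrisation of $\tilde a$ (extracting a base point and direction from its Pl\"ucker data) and a short solve for the intersection with $\tilde b$ reveal that the intersection point depends only on the direction of the vector $b-a$, hence only on the direction of $\ell$ itself and not on which pair $a,b$ was used. Setting $p_\ell := \tilde a\cap\tilde b$ is therefore unambiguous and gives the forward direction of the equivalence. For the converse and for injectivity of $\ell\mapsto p_\ell$, I would fix $p\in\F[i]^3$ and solve $q\times\boldsymbol\omega = \boldsymbol v$ over the Pl\"ucker data of a putative $\tilde a\ni p$; the resulting preimage set $\{a\in\F^3:\,p\in\tilde a\}$ turns out to be the intersection of $\F^3$ with a single isotropic line of $\F[i]^3$, forcing it to coincide with $\ell$ once $\ell$ contains any two of its points.

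With the correspondence in hand, incidences between the $n$ points $\{a_j\}\subset\F^3$ and the $m$ isotropic lines $\{\ell_i\}$ biject with incidences between the $n$ lines $\{\tilde a_j\}\subset C_0$ and the $m$ points $\{p_{\ell_i}\}\subset\F[i]^3$. Since the characteristic of $\F[i]$ is still $p$, the hypothesis $n\leq p^{4/3}$ transfers verbatim, and Theorem \ref{t:complex} applied in $\F[i]$ delivers the claimed bound $O(m^{3/4}n^{1/2}+n+m)$. The only genuine obstacle is the Pl\"ucker-coordinate bookkeeping behind the assignment $\ell\mapsto p_\ell$, in particular handling the isotropic directions in the plane at infinity in a uniform projective fashion; this is routine but indispensable.
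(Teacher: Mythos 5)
Your proposal is correct and follows essentially the same route as the paper: the paper's own proof is a one-liner sending each point $a$ to the line $\phi(a,0)\in C_0$ and invoking the equivalence between zero distance and line intersection, with the identification of each isotropic line with a single concurrency point in $\F[i]^3$ (the content you verify by hand) supplied implicitly by the computation in Lemma \ref{l:con}. One phrasing caveat: the point $p_\ell$ cannot depend \emph{only} on the direction of $\ell$, since parallel isotropic lines must map to distinct points, as your own injectivity argument shows; what your computation actually gives is that, for fixed $a$, the intersection $\tilde a\cap\tilde b$ depends only on the line through $a$ and $b$ (a vector $b'-a$ parallel to $b-a$ forces $b'\in\ell$), which is all that is needed for well-definedness.
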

The other corollary is a weaker generalisation of Theorem \ref{th:Zahl_1.9}.
\begin{corollary}\label{t:main} Let $A\subset \F^3$ be a set of $n$ points with $n\leq p^{4/3}$ and at most $n^{1/2}$ points on an isotropic line. Then for any field $\F$ of characteristic $p$ and any $r\in \F$, $$
|\{(x_1,y_1,z_1),(x_2,y_2,z_2)\in A\times A:\, (x_1-x_2)^2+(y_1-y_2)^2+(z_1-z_2)^2= r^2\}|\ll n^{8/5}\,.
$$
\end{corollary}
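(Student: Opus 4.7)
The plan is to realize the distance equation as a bichromatic line-intersection count in $\F[i]^3$ and apply Theorem \ref{t:complex} in a dyadic decomposition, interpolated against the Kollár-type bound of Theorem \ref{th:Kol}. Fix any $r_1,r_2\in\F[i]$ with $r_1-r_2=r$, and to each $a\in A$ associate the two lines $l_1(a)\in C_{r_1}$ and $l_2(a)\in C_{r_2}$ prescribed by \eqref{complex}, obtaining families $L_1,L_2$ of $n$ red and $n$ black lines respectively. By the Plücker incidence criterion \eqref{inter}, the condition $\|a-b\|=r^2$ is equivalent to $l_1(a)\cap l_2(b)\neq\emptyset$, so it suffices to bound the number of bichromatic intersections in $\F[i]^3$.

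Next, the hypothesis that no more than $n^{1/2}$ points of $A$ lie on an isotropic line must be translated, via Lemma \ref{l:con}, into the statement that at most $n^{1/2}$ lines of either colour pass through any common point of $\F[i]^3$; since concurrent lines in a regular complex are automatically coplanar (through the associated pencil plane), the same bound controls coplanar lines of each colour. Combined with Lemma \ref{l:skewn} to handle bichromatic coplanarity and doubly-ruled surfaces, this verifies the hypotheses of both Theorem \ref{th:KZ} and Theorem \ref{th:Kol} applied to $L_1\cup L_2$.

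Now decompose the set of intersection points dyadically by multiplicity: let $P_k$ denote the set of points incident to between $k$ and $2k$ lines of $L_1\cup L_2$. At every point of $P_k$ at least $k/2$ lines of some fixed colour meet, so Theorem \ref{t:complex} applied to that colour's $n$ lines and the corresponding subset of $P_k$ yields, for $k\geq 3$,
\[
|P_k|\ll \frac{n^2}{k^4}+\frac{n}{k},
\]
which is to be interpolated with the Kollár bound $|P_k|\ll n^{3/2}/k^{3/2}$ from Theorem \ref{th:Kol}. The number of bichromatic pairs realised inside $P_k$ is at most $k^2|P_k|$, so its contribution is at most $\min\bigl(n^{3/2}k^{1/2},\ n^2/k^{2}+nk\bigr)$.

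The two bounds cross at $k_0\sim n^{1/5}$: summing the Kollár term $n^{3/2}k^{1/2}$ over dyadic $k\leq k_0$ is geometric and dominated by $n^{3/2}k_0^{1/2}\sim n^{8/5}$, while summing $n^2/k^2$ over dyadic $k\geq k_0$ is also geometric and dominated by $n^2/k_0^2\sim n^{8/5}$; the lower-order $nk$ piece contributes at most $n\cdot n^{1/2}= n^{3/2}$ at the top scale $k\sim n^{1/2}$, and the $k=2$ case is absorbed into the $O(n^{3/2})$ bound from Theorem \ref{th:KZ}. This totals $O(n^{8/5})$. The main obstacle I anticipate is not the interpolation, which is largely mechanical once both ingredients are available, but the translation step: one must verify cleanly that the isotropic-line hypothesis on $A\subset\F^3$ passes through the map \eqref{complex} to yield the correct concurrency and coplanarity bounds on $L_1\cup L_2\subset\F[i]^3$ — this is precisely the content of Lemma \ref{l:con}, and is where the geometric input particular to this setting is concentrated.
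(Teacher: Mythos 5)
Your proposal is correct and follows essentially the same route as the paper: reduce to (bi)chromatic intersections of lines in the complexes $C_{r_1},C_{r_2}$, use Lemma \ref{l:con} plus the isotropic-line hypothesis to verify the concurrency/coplanarity conditions and Lemma \ref{l:skewn} for the doubly-ruled surface condition, then interpolate the estimate \eqref{new} underlying Theorem \ref{t:complex} against the Koll\'ar bound \eqref{e:gk}, each weighted by $k^2$, with the crossover at $k\sim n^{1/5}$ giving $O(n^{8/5})$. The dyadic summation, the handling of $k=2$ via Theorem \ref{th:KZ}, and the pigeonholing by colour are all consistent with the paper's (terser) argument.
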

\begin{remark} We remark that if one swaps $m$ and $n$ in the bound of Theorem \ref{t:complex}, the resulting incidence bound will replicate the Guth-Katz incidence theorem in $\R^3$ \cite{GK}. This would mean improving $k^{-3/2}$ to $k^{-2}$ in formula \eqref{e:gk} and claiming that families of lines in $\F^3$ that lie in a linear line complex satisfy the same pairwise intersection bounds as lines in $\R^3$. Such a bound 
would improve the bound in Corollary \ref{t:main} to $O(n^{3/2}\log n)$, replicating the claim of Theorem \ref{th:Zahl_1.9} modulo the factor $\log n$. Unfortunately, the author does not see how this swap of $m$ and $n$ can be vindicated, since the spaces of lines in a regular linear  line complex and $\P^3$ are not dual to each other.  Hence, this would in the least require additional geometric considerations, currently unavailable. Further indirect evidence to the heuristics that swapping $m$ and $n$ in the bound of Theorem \ref{t:complex} may be problematic  is furnished by the statement of Corollary  \ref{c:iso}, which  does effect the swap apropos of  isotropic lines and points. However, isotropic lines in $\F^3$ form a quadratic, rather than linear line complex, and its geometric structure is different. \end{remark}

\section{Proofs}

We start with a simple lemma that the author has not seen in the subject literature, and which might be of certain general interest. 
Without loss of generality as to the claim of the lemma, suppose $\F$ is algebraically closed. Call  a {\em regulus} (according to \cite{PW}, \cite{S}) a set of lines in $\P^3$, which are Klein preimages of a transverse intersection of $\mathcal K$ with a two-plane $\mathcal P$ in $\P^5$. It is easy to show (see the latter references) that these lines rule a quadric surface in $\P^3$, which is also ruled by the {\em complimentary regulus}.

\begin{lemma} 
For a doubly-ruled quadric surface in $\P^3$, the two-planes $\mathcal P_1,\,P_2,$ defining the two complimentary reguli, are mutually skew in $\P^5$. Hence, one of the rulings of the surface will have at most two lines, lying in a linear line complex.
\label{l:skew}
\end{lemma}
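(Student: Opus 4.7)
The plan is to exploit the fact that the line-meeting condition \eqref{interinv} is expressed by a non-degenerate symmetric bilinear form $B$ on $\F^6$, whose zero set is the Klein quadric $\mathcal K$, so that ``skew'' in $\P^5$ becomes a statement about the $B$-orthogonal complement.

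First I would set up the form. Define $B(\boldsymbol P, \boldsymbol P') := \boldsymbol P \cdot \mathcal I \boldsymbol P'$; this is symmetric and non-degenerate on $\F^6$, and by \eqref{interinv} two physical lines meet in $\P^3$ iff their Klein images are $B$-orthogonal. Let $\mathcal P_1, \mathcal P_2 \subset \P^5$ be the two 2-planes cutting $\mathcal K$ transversely in the complementary reguli $R_1, R_2$ of the given doubly-ruled quadric $Q \subset \P^3$. The crucial geometric input is that every line of $R_1$ meets every line of $R_2$ (they intersect along $Q$), so
\begin{equation*}
B(\boldsymbol P, \boldsymbol P') = 0 \qquad \text{for all } \boldsymbol P \in \mathcal K \cap \mathcal P_1, \ \boldsymbol P' \in \mathcal K \cap \mathcal P_2.
\end{equation*}

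Next I would promote this identity from $\mathcal K \cap \mathcal P_i$ to all of $\mathcal P_i$. Since the intersection $\mathcal K \cap \mathcal P_i$ is a smooth conic (the two rulings come from a smooth quadric), it spans $\mathcal P_i$ linearly. By bilinearity, $B$ vanishes identically on $\mathcal P_1 \times \mathcal P_2$, so $\mathcal P_2 \subseteq \mathcal P_1^{\perp_B}$. Since $B$ is non-degenerate and each $\mathcal P_i$ has (affine) dimension $3$ in $\F^6$, the orthogonal $\mathcal P_1^{\perp_B}$ also has dimension $3$, giving the equality $\mathcal P_2 = \mathcal P_1^{\perp_B}$.

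To conclude skewness in $\P^5$, I would check that $B|_{\mathcal P_1}$ is non-degenerate: its zero locus in $\mathcal P_1$ is precisely the smooth conic $\mathcal K \cap \mathcal P_1$, which rules out any radical vector. Hence the linear intersection $\mathcal P_1 \cap \mathcal P_1^{\perp_B} = \mathcal P_1 \cap \mathcal P_2$ is trivial, i.e.\ $\mathcal P_1$ and $\mathcal P_2$ are skew in $\P^5$, proving the first assertion.

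For the second assertion, let $\mathcal H \subset \P^5$ be the hyperplane defining the linear line complex. Skewness gives the projective dimension count $\dim(\mathcal P_1 + \mathcal P_2) = 2+2-(-1) = 5$, so $\mathcal P_1$ and $\mathcal P_2$ together span $\P^5$ and cannot both be contained in the 4-dimensional $\mathcal H$. Say $\mathcal P_1 \not\subset \mathcal H$; then $\mathcal P_1 \cap \mathcal H$ is a projective line inside the plane $\mathcal P_1$, and meets the conic $\mathcal K \cap \mathcal P_1$ in at most two points. These at most two points are the Klein images of the lines of the ruling $R_1$ belonging to the complex. The main conceptual obstacle is the transition from the pointwise orthogonality on $\mathcal K \cap \mathcal P_i$ to identical vanishing of $B$ on all of $\mathcal P_1 \times \mathcal P_2$, but this is handled cleanly once one notes that a smooth plane conic spans its plane.
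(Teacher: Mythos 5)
Your proof is correct, and it rests on the same underlying polarity with respect to the Klein quadric as the paper's, but the packaging is genuinely different. The paper picks three points on the conic $\mathcal K\cap\mathcal P_1$, identifies $\mathcal P_2$ as the intersection of the three tangent hyperplanes to $\mathcal K$ at those points, and derives the contradiction from the classical fact that three tangent lines to a smooth conic are not concurrent (valid since ${\rm char}\,\F\neq 2$). You instead work with the bilinear form $B(\boldsymbol P,\boldsymbol P')=\boldsymbol P\cdot\mathcal I\boldsymbol P'$ globally: the observation that a smooth conic spans its plane upgrades the pointwise orthogonality to $B\equiv 0$ on $\mathcal P_1\times\mathcal P_2$, the dimension count gives $\mathcal P_2=\mathcal P_1^{\perp_B}$ exactly, and skewness follows from non-degeneracy of $B|_{\mathcal P_1}$, which is precisely the transversality built into the definition of a regulus. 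Your route buys a cleaner second half: since $\mathcal P_1$ and $\mathcal P_2$ together span $\P^5$, the hyperplane $\mathcal H$ manifestly cannot contain both, which makes precise the paper's assertion that one of the two conics meets $\mathcal H$ transversely. The only ingredient you should flag as an input rather than a triviality is that every line of one ruling meets every line of the complementary ruling; this is standard for a doubly-ruled quadric and is implicitly used by the paper too, whose complementary regulus is by definition the set of common transversals of three skew lines of the first ruling.
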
 
\begin{proof} The intersection $\mathcal P_1\cap\mathcal K$ is a conic curve in $\mathcal P_1$. Take three distinct points on this conic: their Klein preimages $l_{1},l_2,l_3$ are mutually skew lines in $\P^3$. The complimentary regulus is the set of all lines, incident to all three of the above lines. That is $\mathcal P_2$ is the intersection of the three tangent hyperplanes to $\mathcal K$ at the Klein images of $l_{1},l_2,l_3$. If $\mathcal P_1\cap \mathcal P_2$ contains a point $x$, then $x\in \mathcal P_1$ is the intersection of three tangent lines to the conic curve in $\mathcal P_1$, which is a contradiction.

It follows that if a hyperplane $\mathcal H$ in $\P^5$ defines a line complex, then at least one of the two conics, corresponding to the two rulings of a quadric surface in $\P^3$ will meet it transversely. Hence, one of the rulings of the quadric surface has at most two lines, lying in the line complex. \end{proof}

We also present a statement, specific of the line complexes $C_1,\,C_2$, whose normals  are given by \eqref{normal}. The next lemma is  used in the proof of Corollary \ref{t:main}, and we present it as a somewhat alternative way to  \cite[Lemma 2.2]{Za}, to see that  the doubly-ruled surface condition of Theorem \ref{th:KZ} is satisfied.

\begin{lemma} 
If one ruling of a doubly-ruled quadric surface in $\P^3$ contains three lines from one of the complexes $C_1, \,C_2$, the complimentary ruling contains at most two lines from the other complex.
\label{l:skewn}
\end{lemma}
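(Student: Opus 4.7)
The plan is to argue by contradiction, combining the 2-plane observation used in the proof of Lemma \ref{l:skew} with the distance interpretation of line intersections given by \eqref{complex}--\eqref{distance}. Suppose the conclusion fails: on some doubly-ruled quadric $Q\subset\P^3$, one ruling contains three lines in $C_1$ and the complementary ruling contains three lines in $C_2$. The two rulings correspond to mutually skew 2-planes $\mathcal P_1,\mathcal P_2\subset\P^5$, and each $\mathcal P_i\cap\K$ is a smooth conic. A line in $\mathcal P_i$ cannot meet a smooth conic in three points, so $\mathcal P_i\cap\mathcal H_i=\mathcal P_i$: the whole first ruling lies in $C_1$ and the whole complementary ruling lies in $C_2$. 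Because lines in a ruling of a smooth doubly-ruled quadric are pairwise skew, hence of pairwise distinct directions, the projection of $\mathcal P_i\cap\K$ to the direction subspace $\P^2_{\boldsymbol\omega}$ is a smooth conic, not contained in the line $\{P_{01}=0\}$; at most two lines per ruling therefore have $P_{01}=0$, and I may choose $l_j=l_1(a_j)$, $j=1,2,3$, from the first ruling and $l_k'=l_2(b_k)$, $k=1,2,3$, from the complementary ruling, parametrized via \eqref{complex} by $a_j,b_k\in\F[i]^3$.

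Translating pairwise skewness within each ruling through the intersection criterion for two lines of the same complex yields $\|a_j-a_{j'}\|\neq 0$ and $\|b_k-b_{k'}\|\neq 0$ for distinct indices, while the bichromatic intersection condition \eqref{distance} gives $\|a_j-b_k\|=r^2$ for all $j,k$, with $r:=r_1-r_2\neq 0$. Hence each $b_k$ lies in the triple sphere intersection $S:=\bigcap_{j=1}^{3} S_r(a_j)$, where $S_r(a):=\{p:\|p-a\|=r^2\}$. Each pair $S_r(a_j)\cap S_r(a_{j'})$ is contained in the affine radical hyperplane
\[
\Pi_{jj'}:=\{p:2p\cdot(a_j-a_{j'})=\|a_j\|^2-\|a_{j'}\|^2\},
\]
so $S\subseteq\Pi_{12}\cap\Pi_{13}$. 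The core of the argument is to show $|S|\leq 2$ under these skewness constraints.

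I would split into two cases. In the first, $\Pi_{12}=\Pi_{13}$: proportionality of the two coefficient vectors forces $a_1,a_2,a_3$ to be collinear, say $a_2=(1-\mu)a_1+\mu a_3$, and comparing the constants then yields after a brief manipulation the identity $(1-\mu)\|a_1-a_3\|=0$, so that the $a_j$'s are collinear along an isotropic direction and $\|a_j-a_{j'}\|=0$ for all pairs, contradicting skewness of the $l_j$'s. In the second case, $\Pi_{12}\cap\Pi_{13}$ is at most a line $\ell$ and $S\subseteq\ell\cap S_r(a_1)$, which has at most two points unless $\ell\subset S_r(a_1)$; reading off the coefficient of $t^2$ in $\|p_0+t\boldsymbol u-a_1\|$ forces the direction $\boldsymbol u$ of $\ell$ to be isotropic in that exceptional subcase, and then three collinear $b_k\in\ell$ would satisfy $\|b_k-b_{k'}\|=0$, contradicting skewness of the $l_k'$'s.

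The main obstacle is this last subcase $\ell\subset S_r(a_1)$, which sphere-intersection counting alone cannot rule out; it is precisely there that the symmetric skewness condition on the complementary ruling must be invoked, so that the two rulings argue against each other. The finitely many lines per ruling with $P_{01}=0$ are absorbed by the availability of a full 1-parameter ruling to draw from, thanks to the upgrade in the first paragraph.
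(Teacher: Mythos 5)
Your overall strategy --- upgrade three lines in a ruling to the whole ruling, parametrize lines of $C_j$ by points via \eqref{complex}, translate skewness within a ruling into $\|a_j-a_{j'}\|\neq 0$ and the bichromatic incidences into a common-radius sphere system, and then run a radical-plane argument --- is genuinely different from the paper's proof, which is a three-line projective computation in $\P^5$: since $\boldsymbol N_1$ annihilates the Pl\"ucker vectors of the three given lines, the point $\mathcal I(\boldsymbol N_1)$ lies in the $2$-plane of the complementary regulus, and $\boldsymbol N_2\cdot\mathcal I\boldsymbol N_1=-2\neq 0$ forces that plane to meet $\mathcal H_2$ in a line not contained in $\K$, hence in at most two points of the conic. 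Your sphere computation (Cases 1 and 2, including the use of the complementary ruling's skewness to kill the subcase $\ell\subset S_r(a_1)$) is correct and is a nice affine explanation of the same phenomenon.

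However, there is a genuine gap in your reduction to the parametrization \eqref{complex}. You claim that at most two lines per ruling have $P_{01}=0$, justifying this by ``pairwise skew, hence pairwise distinct directions, so the projection of $\mathcal P_i\cap\K$ to $\P^2_{\boldsymbol\omega}$ is a smooth conic.'' Injectivity of the direction map does not make its image a conic: if the $2$-plane $\mathcal P_i$ meets the plane $\{\boldsymbol\omega=0\}$ (equivalently, the ruling contains a line at infinity, as both rulings of a hyperbolic paraboloid do), the image of the conic under that linear projection is a \emph{line} in $\P^2_{\boldsymbol\omega}$, and nothing prevents that line from being $\{P_{01}=0\}$. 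Concretely, $\K\cap\mathcal H_1\cap\{P_{01}=0\}$ is cut out on the $3$-plane $\{P_{01}=0,\ P_{02}=P_{31}\}$ by $P_{02}^2+P_{03}P_{12}=0$, a rank-$3$ quadric; any $2$-plane in that $3$-plane avoiding the vertex meets it in a smooth conic, giving a regulus \emph{entirely} contained in $C_1$ with every line satisfying $P_{01}=0$. For such a quadric your argument has no lines to draw from. The case is rescuable --- when $P_{01}=0$ the defining equations $2r_jP_{01}-P_{02}+P_{31}=0$ of $C_1$ and $C_2$ coincide, so such a ruling lies in \emph{both} complexes, and Lemma \ref{l:skew} applied to $C_2$ then bounds the complementary ruling by two lines --- but this observation is missing from your write-up and must be added for the proof to be complete. (A cosmetic point: since $\|\cdot\|$ already denotes the squared norm in this paper, your radical-plane constants should read $\|a_j\|-\|a_{j'}\|$ rather than $\|a_j\|^2-\|a_{j'}\|^2$.)
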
 
\begin{proof} Let $l_1,\,l_2,\,l_3$ be three lines from, say $C_1$, lying in one ruling, with Pl\"ucker vectors, respectively  $\boldsymbol P_1,\, \boldsymbol P_2,\,\boldsymbol P_3$. Since $\boldsymbol N_1$ is orthogonal (in the dot product set) to all three Pl\"ucker vectors, the point $\mathcal I(\boldsymbol N_1) \in \mathbb P^5$ lies in the two-plane, defining the complimentary regulus (see \eqref{interinv} for notation). Since  $\boldsymbol N_2\cdot \mathcal I\,\boldsymbol N_1\neq 0$, this plane meets the hyperplane $\mathcal H_2$, defining the complex $C_2$, along a line, which is not contained in $\mathcal K$, or the complimentary regulus (represented by a conic in $\mathcal K$) would be a line pencil (a line in $\mathcal K$).  Thus, there are at most two lines in  $C_2$, lying in the complimentary regulus, they are the Klein pre-images of the intersection points of the latter line in $\mathbb P^5$ with $\mathcal K$. 
\end{proof}

\begin{proof}[Proof of Theorem \ref{t:complex}] Here $\F$ and $\P$ are not necessarily what they were at the outset: without loss of generality one may assume that $\F$ is algebraically closed. 

The main idea behind the formal proof that begins in the next paragraph is as follows. Central to it is an application of the point-plane incidence bound from \cite{Ru}, whose version is stated as the forthcoming Lemma \ref{ppb}. The bound may be used to control the number of pairwise intersections of $\leq p^2$ lines {\em inside} the Klein image $\mathcal C$ of a line complex (for another application of this kind see \cite[Lemma 10]{Ru}). Let us refer to lines in $\mathcal C$ as {\em phase lines} -- these are not {\em physical} lines in $\P^3$. Rather, they correspond to physical line pencils in $\P^3$. A  pairwise incidence of phase lines means the two physical line  pencils share a physical line. Now one views each phase line as the unique point-plane pair, defining the corresponding  physical line pencil. Hence a point-plane incidence bound can be used to control the number of incidences between a set physical lines and {\em pairs of line pencils}. Namely if a line $l$ belongs to two distinct line pencils $(q_1,\pi_1)$ (where $q_1$ is a point in $\P^3$, contained in a plane $\pi_1$) and $(q_2,\pi_2)$, then $q_1\in \pi_2$. Conversely, if $q_1\in \pi_2$, since also $q_1\in\pi_1$, then $l=\pi_1\cap \pi_2$ and $q_1,q_2\in l$, so the line $l$ lies in both pencils.

Thus let $L$ be the set of $n$ lines in a linear regular line complex $C$. For $2\leq k\leq n$ let $Q_k$ denote the set of {\em $k$-rich} points in $\P^3$, where some number between $k$ and $2k$ lines from $L$ meet (so $k$ may be assumed to take dyadic values $k=2^j$). Let us first observe that points, incident to one line only contribute $m$ to the total number of incidences. Moreover, we can assume that $k\leq k_0\sim\sqrt{n}$, for otherwise, by inclusion-exclusion principle, ``rich points'', to each of which  more than $k_0$  lines are incident, contribute $O(n)$ to the total number of incidences. See, e.g. \cite[Proposition 2.2]{Gu} for the standard proof.

Since the lines are in the line complex $C$, a point $q\in Q_k$ corresponds to a line pencil $(q,\pi)$, where $\pi$ is the plane, containing  all the lines from $L$ that meet at $q$.  For each line $l \in L,$ indexed, with a slight abuse of notation, by $l=1,\ldots,n$, let $t_l$ be the number of distinct points of $Q_k$ on this line.

By double counting, 
\begin{equation} k|Q_k| \sim \sum_{l=1}^n t_l\,.\label{e:out}\end{equation}  Let $L^*\subset L$ stand for ``rich lines'', that is lines $l:\,t_l> c^{-1}\sqrt{|Q_k|}$. The constant $c$ can be chosen to ensure that the usual inclusion-exclusion estimate yields
\begin{equation}\label{e:ii}
\sum_{l:\,t_l> c^{-1}\sqrt{Q_k}} t_l \leq |Q_k|\,.
\end{equation}
Thus, since $k\geq 2$, and applying Cauchy-Schwarz, 
$$
k|Q_k|\ll  \sum_{l:\, t_l\leq  c^{-1} \sqrt{|Q_k|}} t_l \leq   \sqrt{n} \sqrt{I}\,,
$$
with the term 
$$I:= \sum_{l:\, t_l\leq  c^{-1}\sqrt{|Q_k|}} t_l^2$$ summing, over lines in $L\setminus L^*$ the number of pairs of points of $Q_k$ on each line, that is the number of pairs of line pencils each line belongs to. Separating the count of pairs of distinct pencils for each line, we have

\begin{equation} 
k|Q_k|\ll  \sqrt{n k|Q_k| }  +  \sqrt{n} \sqrt{I_*}\,,
\label{e:dc} \end{equation}
where
$$
I_*:= \sum_{l\in L\setminus L^*} |\{(q,q')\in Q_k\times Q_k, \,q\neq q':\,q,q'\in l\} |\,.
$$

We next fetch the point-plane theorem to bound  $I_*$, as described at the outset of the proof. We quote  a special case \cite[Theorem 3*]{Ru} of the general bound \cite[Theorem 3]{Ru}, disregarding incidences, supported on the set of rich  lines $L^*$.

\begin{lemma}\label{ppb} Let $L^*$ be a set of lines in $\P^3$. For an arrangement $(Q,\Pi)$ of equal number of points and planes in $\P^3$, with $|Q|\leq p^2$, the number of incidences, not supported on $L^*$, is bounded as follows:
$$
|\{(q,\pi)\in Q\times \Pi:\, q\in \pi,\, q \mbox{ and } \pi \mbox{ not incident to a line in }L^*\}|\ll |Q|^{3/2} + K|Q|,
$$
where $K$ is the maximum number of points of $Q$, supported on a line not in $L^*$.
\end{lemma}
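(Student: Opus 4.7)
The plan is to reduce this point-plane incidence bound to a line-incidence problem in the Klein quadric $\mathcal K\subset\P^5$ and then invoke a Guth-Katz-style theorem for lines in $\P^3$, in its characteristic-$p$ form valid for $|Q|\le p^2$. The family $L^*$ plays the role of a set of rich lines that must be removed before the underlying line-incidence bound can be invoked cleanly.

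First, I would restrict to admissible flags $(q,\pi)$ with $q\in\pi$ in which neither $q$ nor $\pi$ is incident to any line of $L^*$. Writing $Q'\subseteq Q$, $\Pi'\subseteq\Pi$ for the admissible subsets and $I$ for the admissible incidence count, Cauchy-Schwarz grouped by planes gives
\begin{equation*}
I^{2}\;\le\;|\Pi|\sum_{\pi\in\Pi'}|Q'\cap\pi|^{2}\;=\;|\Pi|\,(I+T'),
\end{equation*}
where $T':=\sum_{\ell}t_{Q'}(\ell)(t_{Q'}(\ell)-1)\,v_{\Pi'}(\ell)$, with $t_{Q'}(\ell):=|Q'\cap\ell|$, $v_{\Pi'}(\ell):=|\{\pi\in\Pi':\ell\subset\pi\}|$, and the sum running over physical lines $\ell$ in $\P^3$. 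Only lines $\ell\notin L^*$ contribute, since admissibility of the endpoints $q,q'\in Q'$ on $\ell$ forces $\ell\notin L^*$; hence $t_{Q'}(\ell)\le K$ throughout.

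Second, I would encode each admissible flag $(q,\pi)$ as a line in the Klein quadric $\mathcal K\subset\P^5$, namely the pencil of physical lines through $q$ contained in $\pi$, so that two flags sharing a common physical line $\ell$ correspond to their two pencils meeting on $\mathcal K$. Triples $(q,q',\pi)$ contributing to $T'$ correspond to line-line intersections of such pencils along admissible lines $\ell\notin L^*$. A dyadic decomposition over the richness $t_{Q'}(\ell)\in[2^{j},2^{j+1})$, combined at each level with the Guth-Katz theorem for lines in $\P^3$ (valid for $|Q|\le p^2$), yields a bound of the form $T'\ll|Q|^{2}+K^{2}|Q|$. Substituting into the Cauchy-Schwarz inequality gives $I^{2}\ll|Q|I+|Q|^{3}+K^{2}|Q|^{2}$, and solving in either regime $I\gtrsim|Q|$ or $I\lesssim|Q|$ produces $I\ll|Q|^{3/2}+K|Q|$, as claimed.

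The principal obstacle I anticipate is the dyadic bookkeeping in the Klein-quadric line-incidence step: at each richness level one must combine the Guth-Katz count of $t$-rich lines with the factor $v_{\Pi'}(\ell)$ to aggregate to the target $|Q|^{2}+K^{2}|Q|$, and one must check that the $L^*$-exclusion propagates through the lift to $\mathcal K$ so that no rich-line contribution sneaks back in. The characteristic-$p$ hypothesis $|Q|\le p^2$ enters only through the underlying line incidence theorem; apart from this, the argument is insensitive to the ground field.
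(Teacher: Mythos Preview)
The paper does not prove Lemma~\ref{ppb} at all: it is simply quoted as a special case of \cite[Theorem~3*]{Ru} (the point--plane incidence theorem). So there is no ``paper's own proof'' to compare your attempt to; the lemma is used as a black box.

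Evaluated on its own merits, your Cauchy--Schwarz reduction to the quantity
\[
T'=\sum_{\ell\notin L^*} t_{Q'}(\ell)\bigl(t_{Q'}(\ell)-1\bigr)\,v_{\Pi'}(\ell)
\]
is correct and standard, and the observation that admissibility forces $\ell\notin L^*$ (hence $t_{Q'}(\ell)\le K$) is fine. The problem is the second step, where all the content lives. You encode flags $(q,\pi)$ as lines on the Klein quadric $\mathcal K\subset\P^5$ and then propose to apply ``the Guth--Katz theorem for lines in $\P^3$''. But those pencil-lines lie on a four-dimensional quadric in $\P^5$, not in $\P^3$, so there is no off-the-shelf Guth--Katz statement you can invoke. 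What one actually needs at this point \emph{is} Rudnev's point--plane theorem (or an equivalent), so as written the argument is circular.

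The genuine proof in \cite{Ru} (or the streamlined version in \cite{FdZ}) does proceed by mapping to a line-incidence problem, but the substantive work is a specific projection from the flag variety down to $\F^3$ together with a verification that the resulting line family satisfies the non-degeneracy hypotheses (few lines per plane/regulus) required by the Guth--Katz/Koll\'ar bound in positive characteristic. Your sketch names the right ingredients but omits exactly this step; the ``principal obstacle'' you flag at the end is not bookkeeping but the heart of the theorem.
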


We apply Lemma \ref{ppb} to the point-plane arrangement of $|Q_k|$ points and planes, naturally corresponding to the set $Q_k$. In view of Lemma \ref{l:skew}, to  satisfy the $p$-constraint of Lemma \ref{ppb} we can use the estimate $|Q_k|\leq n^{3/2}$ of Theorems \ref{th:KZ} (formally with $L_1=L_2$), and Theorem \ref{th:Kol}\footnote{Since the lines lie in a single regular line complex, the coplanarity constraint of the theorems is irrelevant, since each of the $\ll n^{1/2}$ putative ``rich planes'' with more than $n^{1/2}$ lines therein will contribute at most one point to $Q_k$, corresponding to the line pencil in this plane. In addition, each line meets  $\ll n^{1/2}$ such planes. Thus lines lying in rich planes contribute $O(n^{3/2})$ to $|Q_k|$ and can be erased.}. Furthermore, by definition of the set of rich lines $L^*$, whose contribution is controlled by \eqref{e:ii}, the quantity $K$ in the application of the lemma is $O(|Q_k|^{1/2})$.

Hence, 
$$I_*=O(|Q_k|^{3/2})\,.$$ 
Then it follows from \eqref{e:dc} that  \begin{equation}\label{new} 
|Q_k|\ll \frac{n^2}{k^4}+\frac{n}{k}\,.
\end{equation} 
This essentially completes the proof of Theorem \ref{t:complex}, but for a technical issue that the two terms in the right-hand side of \eqref{new} meet when $k\sim n^{1/3}$, rather than $k\sim n^{1/2}$. This, without extra care, would cause $n\log n$ to replace the $n$-term in the estimate of Theorem \ref{t:complex}.

To avoid this, we bootstrap estimate \eqref{new} as follows. Return to estimate \eqref{e:out} and sum over dyadic values of $k=2^j,\,j\in J$, where $J:=\{j:\,cn^{1/3}\leq 2^j \leq c^{-1}n^{1/2}\}$.
It follows from \eqref{new} that
\begin{equation}\label{auxi} |Q_J|\ll n^{2/3}\,.\end{equation}

Let $X$ denote the left-hand side of estimate \eqref{e:out}, which we aim to bound as $X=O(n)$, namely 
$$
X:= \sum_{ k=2^j,\,j\in J} k|Q_k| \sim\sum_{l=1}^n \bar t_l\,,
$$
where $\bar t_l$ is now the total number of points of $Q_J:=\cup_{j\in J} Q_{k=2^j}$ on the line $l$. 

Repeating the argument leading from \eqref{e:out} to \eqref{e:dc}, now apropos of the quantity $X$, yields
$$
X\ll \sqrt{nX} + \sqrt{n} |Q_J|^{3/4}\,.
$$
It follows from \eqref{auxi} that $X\ll n$.

Finally, the main term $n^{1/2}m^{3/4}$ in the estimate of Theorem \ref{t:complex} arises from estimate \eqref{new} for $k\ll n^{1/3}$ in the standard way. As has already been mentioned, the cases $k=1$ and $k\gg n^{1/3}$ contribute $O(m+n)$ incidences.
For $k\ll n^{1/3}$ redefine  a $k$-rich point as the one where at least $k$ lines meet: dyadic summation for $k\ll n^{1/3}$ will have no effect on the dominating first term in \eqref{new}. Inverting estimate \eqref{new} (dropping the term $\frac{n}{k}$) as $k\ll n^{1/2} |Q_k|^{-1/4}$ means ordering points of pairwise intersections of lines from $L$ by non-increasing richness, so that the $t$th point on the list is incident to  $O(n^{1/2}t^{-1/4})$ lines. The claim of Theorem \ref{t:complex} follows after summing in $t\in [m]$.\end{proof}
 
 \begin{proof}[Proof of Corollary \ref{c:iso}]
Let $A\subset \F^3$ be the set of $n$ points in question and $L=L(A)$ the line family in the complex $C_0$, defined by \eqref{complex}, with $r=0$. A pair of distinct points of $A$ is at a zero distance, i.e. lies on an isotropic line in $\F^3$, iff two lines in $L(A)$ meet. The statement follows. \end{proof}

\begin{proof}[Proof of Corollary \ref{t:main}]
We need to bound from above the number of bichromatic line intersections from line families $L_1$, $L_2$, lying in two distinct line complexes $C_1=C_{r_1}$, $C_2=C_{r_2}$ for $r_1,r_2\neq 0$, defined by \eqref{complex},  or monochromatic intersections within the same line complex $C_0$ if $r=0$.

We use the following lemma, whose proof is a calculation, presented separately.

\begin{lemma}\label{l:con} Let $C=C_r$ be a line complex, defined by \eqref{complex}, for some $r\in \F$. Let $\bar \F=\F[i]$ and $\bar \P^3$ the three-dimensional projective space over $\bar \F$. If a set of lines from $C$ is concurrent at a point $\boldsymbol u\in \bar \P^3$, then the points of $A$, defining these lines lie on an isotropic line in $\F^3$. 

Moreover, if the complexes $C_1,\,C_2$ correspond to distinct $r_1,r_2\neq 0$ and  $\F\neq \bar \F$,  it is impossible that two lines of each colour be concurrent (coplanar) at the same point (plane) in $\bar\P^3$.
\end{lemma}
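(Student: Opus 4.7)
The plan is to reduce both parts to analyzing a linear system in $(x,y,z) \in \F^3$. For a point $\boldsymbol u = (a,b,c) \in \bar \P^3$, the condition $\boldsymbol u \times \boldsymbol \omega = \boldsymbol v$ for the line \eqref{complex} to pass through $\boldsymbol u$ expands, after using the Klein relation built into \eqref{complex}, to the two $\bar \F$-linear equations
\[
ax - aiy - z = c+r, \qquad x + iy + az = ar - b.
\]
The $\bar \F$-line of solutions has direction $D=(i(1-a^2),-(1+a^2),2ai)$, and a direct computation gives $D\cdot D=0$. For part one: if $\F = \bar \F$ the $\bar \F$-line is already an $\F$-line with isotropic direction. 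If $\F \neq \bar \F$, I would write $D=X+iY$ with $X,Y\in\F^3$; the $\F$-points on the $\bar \F$-line form a line precisely when $X$ and $Y$ are $\F$-proportional, and then the $\F$-direction is proportional to $Y$, with $D\cdot D=0$ collapsing to $Y\cdot Y=0$. This proves part one.

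For part two in the concurrent case, writing $a=a_1+ia_2$, $b=b_1+ib_2$, $c=c_1+ic_2$ and splitting the two equations above into real and imaginary parts produces a real $4\times 3$ linear system in $(x,y,z)\in\F^3$. A straightforward minor computation shows that its coefficient matrix has rank $2$ if and only if $a_1^2+a_2^2=-1$; since $-1$ is not a square in $\F$, this forces in particular $a_1\neq 0$. Under these conditions, rank-$2$ compatibility of the augmented matrix reduces, after elimination, to the $r$-dependent relation
\[
a_1 b_1 + a_2 b_2 = -c_1 - 2r,
\]
which determines $r$ uniquely in terms of the coordinates of $\boldsymbol u$ alone. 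Hence for fixed $\boldsymbol u$ at most one value of $r$ admits two or more concurrent lines of $C_r$ through $\boldsymbol u$, and $r_1\neq r_2$ is impossible.

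The coplanar case is handled in the same way, replacing $\boldsymbol u \times \boldsymbol \omega = \boldsymbol v$ by the Pl\"ucker--plane incidence $P\boldsymbol a = 0$ with $\boldsymbol a\in\bar\F^4$ the dual coordinates of $\pi$: two of the four resulting equations are linear in $(x,y,z)$ and the other two quadratic equations are implied by these via the Klein identity for \eqref{complex}. One again obtains an isotropic-direction $\bar \F$-line of solutions, and after the real--imaginary splitting an $r$-dependent consistency relation linear in $r$ that singles out at most one $r$ compatible with a given $\pi$. The main technical obstacle throughout is the bookkeeping of degenerate subcases, where the coefficients of $r$ in the consistency relations could happen to vanish; in each such subcase one verifies either that the rank-$2$ premise (ensuring a line rather than a single point of solutions) already fails, or that the resulting $r$-independent constraint is still incompatible with distinct $r_1, r_2$.
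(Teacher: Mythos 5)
Your concurrency analysis is correct and follows essentially the same route as the paper: the two linear equations you extract from $\boldsymbol u\times\boldsymbol\omega=\boldsymbol v$ (the third cross-product component being implied via the Klein relation since $\omega_1=1$) are exactly the paper's, your direction $D=(i(1-a^2),-(1+a^2),2ai)$ is a rescaling of the paper's $(\alpha,i\beta,2)$ with $\alpha=a^{-1}-a$, $\beta=a^{-1}+a$, and your real $4\times3$ rank computation is a tidier packaging of the paper's case analysis: the rank-$2$ condition $a_1^2+a_2^2=-1$ matches the paper's parenthetical remark, and the consistency relation $a_1b_1+a_2b_2=-c_1-2r$, linear in $r$ with coefficient $-2\neq0$ (recall $p$ is odd), pins down $r$ with no degenerate subcase, so for concurrency you do not even need $r_1,r_2\neq0$. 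One small omission: the lemma allows $\boldsymbol u\in\bar\P^3$, so concurrency at a point at infinity must be treated. Since $P_{01}=1$ for every line of $C_r$, concurrency at infinity means equal $\boldsymbol\omega$, i.e.\ $z$ and $x-iy$ constant on the relevant points of $A$, giving the isotropic direction $(1,-i,0)$; for $i\notin\F$ this forces a single point of $A$, so the bichromatic claim is vacuous there. This is a one-line fix, but your write-up skips it while the paper does it explicitly.

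The more substantive issue is that the coplanar half of the ``moreover'' claim is only asserted. You correctly identify the structure (a line of solutions with isotropic direction, an $r$-dependent consistency condition) and correctly flag that the danger lies in degenerate subcases, but you defer exactly the verification in which that half of the lemma lives. Concretely, following the paper's normalisation $\boldsymbol\omega=\boldsymbol u\times\boldsymbol v$: the subcases $u_3=0$ and $u_2=0$ must be disposed of separately (they put the points of $A$ on lines of direction $(1,\pm i,0)$, impossible for two distinct points when $i\notin\F$); and for $u_2,u_3\neq0$ the offsets $-\beta r+(u_2^{-1}-u_1u_3^{-1})$ and $i[\alpha r-(u_2^{-1}+u_1u_3^{-1})]$ must both lie in $\F$ for two distinct values of $r$, which forces $\beta\in\F$ and $i\alpha\in\F$; combined with $\alpha\in\F$ and $i\beta\in\F$ (needed for $(\alpha,i\beta,2)$ to be an $\F$-direction) this gives $\alpha=\beta=0$, hence $w=u_3u_2^{-1}=0$, a contradiction. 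So your plan does close, and no hidden obstruction lurks in the subcases, but as submitted the coplanar statement is a claim rather than a proof.
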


As was mentioned in the introduction, the claim of the lemma for $i\not \in \F$, leads to the statement of Theorem \ref{th:Zahl_1.9}, which holds without constraining the number of points of $A$ lying on an isotropic line.

If $i\in \F$, to prove Corollary \ref{t:main} we use Theorem \ref{t:complex} together with Theorems \ref{th:KZ}, \ref{th:Kol},  applied to the lines in $L_1\cup L_2$. Optimising the bounds \eqref{new} and \eqref{e:gk}, multiplied by $k^2$, in the range $2\leq k\leq n^{1/2}$ at $k=n^{1/5}$ completes the proof,  once the use of \eqref{e:gk} has been fully justified, including the case $k=2$.

For this purpose observe that the coplanarity condition of Theorems \ref{th:KZ}, \ref{th:Kol} is implied by  Lemma \ref{l:con} and the assumption that $A$ has at most $n^{1/2}$ points on an isotropic line in $\F^3$.
The complimentary ruling condition of Theorem \ref{th:KZ} follows from Lemma \ref{l:skewn}.
Thus, Theorems \ref{th:KZ} and \ref{th:Kol} both apply.
\end{proof}

\begin{proof}[Proof of Lemma \ref{l:con}]
The proof is a calculation, similar to those in \cite[Proofs of Theorems 3--5]{RS}. 
A line in the family $L=L(A)$ in the complex $C=C_r$, for $a=(x,y,z)\in A$, is given by the Pl\"ucker vector 
$$
[\boldsymbol \omega:\boldsymbol v] = [1:r-z:x-iy:r^2-\|a\|^2:-(r+z):x+iy]\,.
$$
Let us first check concurrencies at infinity by fixing $r-z = \omega_2$ and $x-iy = \omega_3$. It follows that if two or more points $a\in A$, satisfy this, they lie on an isotropic line with the direction vector $(1,-i,0)$, which is not in $\F^3$, unless $-1$ is a square in $\F$.

Otherwise, suppose concurrency happens at some point $\boldsymbol  u =(u_1,u_2,u_3) \in \bar\F^3$. Then one must have $\boldsymbol v = \boldsymbol u \times \boldsymbol \omega$, where $\times$ is the cross product. This yields two independent equations
$$
u_1(r-z) - u_2 = x+iy\,, \qquad u_3-u_1(x-iy) =-(r+z)\,.
$$
If $u_1=0$, this means $x+iy=u_2$ and $r+z=-u_3$, so if two or more points $a\in A$ satisfy this, they lie on an isotropic line with the direction vector $(1,i,0)$, which is not in $\F^3$, unless $-1$ is a square.

If $u_1\neq 0$, one gets, with $\alpha= u_1^{-1}-u_1$ and $\beta = u_1^{-1}+u_1$,
\begin{equation}\label{e:eqs1}
2x = \alpha z + \beta r + (u_3u_1^{-1}-u_2),\qquad 2y = i[ \beta z - \alpha r + (u_3u_1^{-1}+u_2)]\,.
\end{equation}
It follows that if there are two or more points $a\in A$, satisfying this, they all lie on some line, whose direction vector is $(\alpha, i \beta, 2)$, which is easily verified to be isotropic by the above definitions of the quantities $\alpha,\beta$.

If $i\not \in \F$, then in order for this direction vector to be in $\F^3$, one must have $\alpha,\,i\beta \in \F$. (These conditions are not independent and if $u_1=u_1'+iu_1''$, with $u_1',u_1''\in \F$, this happens for $u_1'^2+u_1''^2=-1$.)
Thus  for $i\not \in \F$ and $\boldsymbol u$ fixed, one cannot have two lines from $C_1$ and two lines from $C_2$ meet at $\boldsymbol u$. Indeed, since the values $r_1$, $r_2$, identifying the line complexes $C_1,\,C_2$ are nonzero, say the quantity $u_3u_1^{-1}-u_2$ in the first equation in \eqref{e:eqs1} has to be such that $\beta r + (u_3u_1^{-1}-u_2) \in \F$, which is impossible for two distinct nonzero values of $r$. 

Coplanarity analysis is similar. Since the Pl\"ucker coordinate $P_{01}=1$ for all lines, none one of them lies in the plane at infinity, and coplanarity of two lines means that for some $\boldsymbol  u =(u_1,u_2,u_3) \in \bar\F^3$ one has $\boldsymbol\omega = \boldsymbol u\times \boldsymbol v$.

This yields two independent equations
$$
u_2(x+iy)+u_3(z+r) =1\,, \qquad -u_1(x+iy) + u_3( r^2-\|a\|^2) = r-z\,.
$$
If $u_3=0$, then  if two or more points $a\in A$ satisfy this, they lie on an isotropic line with the direction vector $(1,i,0)$, which is not in $\F^3$, unless $-1$ is a square in $\F$. Similarly, if $u_2=0$, the same conclusion is drawn as to an  isotropic line with the direction vector $(1,-i,0)$.

If $u_2,u_3\neq 0$, one gets, with $w= u_3u_2^{-1}$, $\alpha= w^{-1}-w$ and $\beta = w^{-1}+w$, the following analogue of \eqref{e:eqs1}:
$$
2x = \alpha z - \beta r + (u_2^{-1}-u_1u_3^{-1}),\qquad 2y = i[ \beta z + \alpha r -  (u_2^{-1}+u_1u_3^{-1})]\,.
$$
It follows that if there are two or more points $a\in A$, satisfying this, they all lie on some line with isotropic direction vector $(\alpha, i \beta, 2)$. The rest of the conclusions apropos of coplanarity in the special case $i\not\in \F$ are the same as above.
\end{proof}

\paragraph{\textbf{Acknowledgments}.}  The author is partially supported by the Leverhulme Trust  Grant RPG--2017--371 and grateful to Joshua Zahl for sharing valuable insights, as well as the efforts of anonymous referees that have helped vastly to improve the presentation of the results in this paper.


\begin{thebibliography}{99}

\bibitem{BKT} {\sc J. Bourgain, N. H. Katz, T. Tao,} {\em  A sum-product estimate in finite fields, and applications,} Geometric \& Functional Analysis GAFA, {\bf 14}: 27--57, 2004.

\bibitem{E} {\sc P. Erd\H os,}  {\em On sets of distances of n points,}  Amer. Math.  Monthly, {\bf 53}(5): 248--250, 1946.

\bibitem{Gu} {\sc L. Guth,} {\em Distinct distance estimates and low degree polynomial partitioning,} DIsc. Comp. Geometry, {\bf 53}(2): 428--444, 2015.

\bibitem{GK} {\sc L. Guth, N. H. Katz,} {\em On the Erd\H os distinct distances problem in the plane,} Ann. of Math. (2) {\bf 181}(1): 155--190,  2015. 

\bibitem{IR} {\sc A. Iosevich, M. Rudnev,} {\em Erd\H os distance problem in vector spaces over finite fields,} Transactions AMS, {\bf 359}(12): 6127--6142, 2007.


\bibitem{Ko}  {\sc J. Koll\'ar,} {\em Szemer\'edi-Trotter type theorems in dimension 3}, Adv. Math. {\bf 271}: 30--61, 2015.

\bibitem{PW} {\sc H. Pottmann, J. Wallner}, {\em Computational Line Geometry,} Springer, Berlin, 2001, 565 pp.


\bibitem{Ru} {\sc M. Rudnev,} {\em On the number of incidences between points and planes in three dimensions,} Combinatorica {\bf 38}(1):  219--254, 2018.

\bibitem{Ru1} {\sc M. Rudnev}, {\em Point-plane incidences and some applications in positive characteristic,} in: {\em Combinatorics and Finite Fields: Difference Sets, Polynomials, Pseudorandomness and Applications}, K.-U. Schmidt and A. Winterhof, eds., vol. {\bf 23} of Radon Series on Computational and Applied Mathematics. de Gruyter, Berlin, Boston, pp. 211--240.

\bibitem{RS} {\sc M. Rudnev, J. M. Selig,} {\em  On the use of the Klein quadric for geometric incidence problems in two dimensions,} SIAM J. Discrete Math. {\bf 30}(2): 934--954, 2016.

\bibitem{S}  {\sc J. M. Selig,} {\em Geometric Fundamentals of Robotics,} Monographs in Computer Science, Springer, 2007, 416pp.

\bibitem{SST} {\sc J. Spencer, E. Szemer\'edi, W. T. Trotter,} {\em Unit distances in the Euclidean plane}, in Bollob\'as, B. (ed.), Graph Theory and Combinatorics, London: Academic Press, pp. 293--308, 1983.

\bibitem{Z} {\sc J. Zahl,} {\em Breaking the 3/2 barrier for unit distances in three dimensions,} Int. Math. Res. Not. IMRN, no. 20, 6235--6284, 2019.

\bibitem{Za} {\sc J. Zahl,} {\em Sphere tangencies, line incidences, and Lie's line-sphere correspondence,} Math. Proc. Cam. Phil. Soc. DOI: https://doi.org/10.1017/S0305004121000256, 2021.

\bibitem{FdZ} {\sc F. de Zeeuw}, {\em A short proof of Rudnev's point-plane incidence bound,} arXiv:1612.02719v1 [math.CO]  8 Dec 2016.

\end{thebibliography}
\end{document}